\documentclass[12pt]{article}
\usepackage[psamsfonts]{amssymb}
\usepackage{amsmath}
\usepackage{amsfonts,euscript,amssymb,amsmath,amsthm}
\usepackage{graphicx}
\usepackage{mathrsfs}

\counterwithin{equation}{section}

\newtheorem{theorem*}{Theorem}
\newtheorem{lemma}{Lemma}
\newtheorem{corollary}{Corollary}
\newtheorem{proposition}{Proposition}

\begin{document}

\begin{center}
{\bf \large  On the constant in Klartag's theorem for lattice packings in high dimensions.}
\end{center}

\begin{center}

{\bf I.~Rezvyakova}
\let\thefootnote\relax\footnote{{\bf Keywords:} Klartag's density bound, sphere packing problem, lattice packing. }

\footnote{{\bf AMS 2020 Mathematics Subject Classification:} 11H31, 52C17.


}

{\small  Steklov Mathematical Institute of the Russian Academy of Sciences, Moscow\\
HSE University, Moscow 
}
\end{center}

\begin{abstract}
 {\small  In 2025, Boaz Klartag proved that there exists a lattice sphere packing in
$\mathbb{R}^n$ whose density is at least
\[
    c n^2 2^{-n},
\]
where $c>0$ is an absolute constant. Our aim is to refine the analysis of
Klartag's argument and to show that the constant can be taken as
\[
    c = \frac{1}{2e} - o(1),
    \qquad n\to\infty.
\]}
\end{abstract}
\hspace{1mm}

\begin{center}
{\bf \S 1. Technical lemma. }
\end{center}

\begin{lemma}
Let $n$ be a large integer, and let $\alpha \ge1, C >1, T>0$ be parameters, possibly depending on $n$. Define 
\[
    J(t)=\int_0^{C}\Phi(y)
    \left(1-\frac{y\sqrt t}{\alpha}\right)^{-(n+2)/2}\,dy,
\]
where  
\begin{equation}\label{eq0}
    \Phi(y)\le \min\left\{\frac12,\frac{e^{-y^2/2}}{\sqrt{2\pi}\,y}\right\}.
\end{equation}
If $\max (n^3T^2, n^{-1} T^{-1/2}, C\sqrt{T}) = o(1)$, then
\[
    \int_0^T \frac{n\sqrt t}{2}J(t)\,dt
    \le
    (1+o(1))\frac{8}{n^2}\exp\left(\frac{n^2T}{8}\right).
\]
\end{lemma}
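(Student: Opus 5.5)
The plan is to reduce $J(t)$ to a Gaussian-type integral of the form $\int \Phi(y)e^{\lambda y}\,dy$, evaluate that asymptotically by completing the square, and then integrate explicitly in $t$. Put $u=y\sqrt t/\alpha$. Since $\alpha\ge 1$ and $y\le C$, $t\le T$, we have $0\le u\le C\sqrt T=o(1)$. Hence the elementary bound $-\log(1-u)\le u+u^2$ (valid for $0\le u\le 1/2$) gives
\[
\Bigl(1-\frac{y\sqrt t}{\alpha}\Bigr)^{-(n+2)/2}\le \exp\Bigl(\lambda y+\frac{n+2}{2}\,y^2t\Bigr),\qquad \lambda=\lambda(t)=\frac{(n+2)\sqrt t}{2}.
\]
This is where $\alpha\ge1$ enters: I replace $1/\alpha$ by $1$.

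The quadratic correction $\frac{n+2}{2}y^2t$ should not be absorbed naively, because $C$ may be large. I would split the $y$-range at $y_0=\lambda+K\sqrt{\log(n^2T)}$, say, or simply at $y_0=2\lambda+M$ with $M\to\infty$ slowly.
\begin{itemize}
\item For $y\le y_0$ one has $\frac{n+2}{2}y^2t\ll n\lambda^2 t\asymp n^3t^2\le n^3T^2=o(1)$. So on this range the correction factor is $1+o(1)$.
\item For $y>y_0$ the factor $e^{-y^2/2}$ in \eqref{eq0} dominates: the exponent is $-\tfrac12 y^2(1-(n+2)t)+\lambda y$ and $(n+2)t=o(1)$. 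The tail is therefore negligible relative to the main term, with a slightly worse Gaussian constant.
\end{itemize}
Thus
\[
J(t)\le(1+o(1))\int_0^\infty \Phi(y)e^{\lambda y}\,dy+\text{(negligible)}.
\]

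Next I estimate $I(\lambda)=\int_0^\infty\Phi(y)e^{\lambda y}dy$. On $[0,1]$ use $\Phi\le\frac12$, which contributes $O(e^{\lambda})$. On $[1,\infty)$ use $\Phi\le e^{-y^2/2}/(\sqrt{2\pi}y)$ and complete the square:
\[
\int_1^\infty\frac{e^{\lambda y-y^2/2}}{\sqrt{2\pi}\,y}\,dy=e^{\lambda^2/2}\,\mathbb E\bigl[Y^{-1}\mathbf 1_{Y\ge1}\bigr],\qquad Y\sim N(\lambda,1).
\]
This is $(1+o(1))e^{\lambda^2/2}/\lambda$ as $\lambda\to\infty$, and it is $O(e^{\lambda^2/2})$ uniformly in $\lambda\ge0$.

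Now insert this bound into the $t$-integral. For $t\ge t_1=M^2/n^2$ with $M\to\infty$ slowly, $\lambda\to\infty$ and
\[
\frac{n\sqrt t}{2}\cdot\frac{e^{\lambda^2/2}}{\lambda}=\frac{n}{n+2}\,e^{(n+2)^2t/8}=(1+o(1))\,e^{n^2t/8}.
\]
Here I use $\frac{(n+2)^2-n^2}{8}\,t\le nT=o(1)$, which follows from $n^3T^2=o(1)$. Integrating gives
\[
\int_{t_1}^{T}(1+o(1))e^{n^2t/8}\,dt\le(1+o(1))\frac{8}{n^2}e^{n^2T/8}.
\]
The range $t\le t_1$ contributes $O\bigl(t_1\,n\sqrt{t_1}\,e^{O(M^2)}\bigr)=O\bigl(M^3e^{O(M^2)}/n^2\bigr)$. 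Since $n\sqrt T\to\infty$ (the hypothesis $n^{-1}T^{-1/2}=o(1)$), we have $e^{n^2T/8}\to\infty$, so choosing $M$ growing slowly enough makes this contribution $o\bigl(n^{-2}e^{n^2T/8}\bigr)$.

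The main obstacle I expect is the uniformity bookkeeping. The relative errors, namely
\begin{itemize}
\item the $(1+o(1))$ in the asymptotics of $\mathbb E[Y^{-1}]$,
\item the quadratic correction,
\item the Gaussian tail beyond $y_0$,
\end{itemize}
must all be $o(1)$ uniformly over $t\in[t_1,T]$, and $t_1$, $y_0$ must be chosen compatibly. The Gaussian tail beyond $y_0$ needs care when $C$ is large. There the factor $(1-(n+2)t)$ in the exponent must still leave enough Gaussian decay past $\lambda$, and that is exactly where $n^3T^2=o(1)$ is used a second time.
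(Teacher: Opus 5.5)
Your argument is correct, but it integrates in the opposite order from the paper. The paper first uses Fubini/Tonelli to put the $t$-integral inside. It then bounds $\frac{n\sqrt t}{2}\le\frac{n\sqrt T}{2}$ and integrates in $t$ explicitly via $\int_0^T e^{ny\sqrt t/2}\,dt\le \frac{4\sqrt T}{ny}e^{ny\sqrt T/2}$. Everything then reduces to a single Gaussian integral in $y$ centred at the endpoint saddle $a=n\sqrt T/2$. The split $|y-a|\le\sqrt a$ versus its complement does two jobs at once: it turns the factor $1/y^2$ into $(1+o(1))\,4/(n^2T)$, and it controls the quadratic error $ny^2T$.

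You instead fix $t$ and prove the pointwise bound $J(t)\le(1+o(1))e^{\lambda^2/2}/\lambda$ with $\lambda=(n+2)\sqrt t/2$, uniformly on $[t_1,T]$. You then integrate $\frac{n}{n+2}e^{(n+2)^2t/8}$ exactly. This buys a stronger statement: the integrand $\frac{n\sqrt t}{2}J(t)$ is itself at most $(1+o(1))e^{n^2t/8}$. It also makes the constant $8/n^2$ transparent as $\int^T e^{n^2t/8}\,dt$. Keeping $n+2$ exact and passing to $n^2$ only at the end via $nT=o(1)$ is cleaner than the paper's absorption of that discrepancy into $O(C\sqrt T)$ errors.

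The price is an extra cut-off in $t$. The asymptotic $\mathbb E[Y^{-1}\mathbf 1_{Y\ge1}]\sim1/\lambda$ needs $\lambda\to\infty$, so $M$ must be tied to $n^2T$, not just to $n$; for instance $M^2=\sqrt{n^2T}$ works, since then $t_1\le T$, $nM^2T\le n^3T^2=o(1)$, and $M^3e^{O(M^2)}=o(e^{n^2T/8})$. The cut in $y$ must also be compatible with this. With $y_0=2\lambda+M$, your claim $\frac{n+2}{2}y^2t\ll n\lambda^2t$ holds only because $M\le2\lambda$ on $[t_1,T]$, so $y_0\le4\lambda$ there; state this explicitly. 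The paper's Fubini-first arrangement avoids any splitting in $t$, because the explicit $t$-integration treats small times automatically.
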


\begin{proof}
Since $y \ge 0$, \(\alpha\ge1\), and $C\sqrt{T} = o(1)$, we shall use the estimate 
\[
    \left(1-\frac{y\sqrt t}{\alpha}\right)^{-(n+2)/2}
    \le
    (1-y\sqrt t)^{-(n+2)/2} .
\]
Thus
\[
    J(t)\le
    \int_0^{C}\Phi(y)
    (1-y\sqrt t)^{-(n+2)/2}\,dy.
\]
We split
\[
    J(t)=J_1(t)+J_2(t),
\]
where \(J_1\) corresponds to the range \(0\le y\le1\), and \(J_2\) to the range 
\(1\le y\le C\).

For \(0\le y\le1\), using the bound \(\Phi(y)\le1/2\), we obtain
\[
    J_1(t)\le
    \frac12(1-\sqrt t)^{-(n+2)/2}.
\]
Since \( \max (n T, \sqrt T) = o(1)\),
\[
    (1-\sqrt t)^{-(n+2)/2}  \le
    \exp\left(\frac{n\sqrt t}{2}+O(\sqrt T + n T)\right)
    =
    \exp\left(\frac{n\sqrt t}{2}\right)\left(1 +o(1)\right).
\]
Therefore
\[
    \int_0^T \frac{n\sqrt t}{2}J_1(t)\,dt
    \ll
    T\exp\left(\frac{n\sqrt T}{2}\right) =
    o\left(\frac1{n^2}e^{n^2T/8}\right)
\]
since $n^2 T \to +\infty$.

For \(y\ge1\), we use
\(
    \Phi(y)\le \frac{e^{-y^2/2}}{\sqrt{2\pi}\,y}.
\)
Thus, 
\[
    J_2(t)\le
    \int_1^{C}
    \frac{e^{-y^2/2}}{\sqrt{2\pi}\,y}
    (1-y\sqrt t)^{-(n+2)/2}\,dy.
\]
Consequently,
\[
\begin{aligned}
\int_0^T \frac{n\sqrt t}{2}J_2(t)\,dt
&\le
\int_1^{C}
\frac{e^{-y^2/2}}{\sqrt{2\pi}\,y}
\int_0^T
\frac{n\sqrt t}{2} (1-y\sqrt t)^{-(n+2)/2}
\,dt\,dy.
\end{aligned}
\]
We first consider the range that gives the main contribution.  Put
\[
    a=\frac{n\sqrt T}{2}
\]
and consider the interval
\[
    \left|y- a\right|
    \le
    \sqrt{a}.
\]
In this range \(y=O(n\sqrt T)\), and, therefore, 
\[
    ny^2T=O(n^3T^2) = o(1).
\]
Thus, in the main range, uniformly for \(0\le t\le T\),
\[
    (1-y\sqrt t)^{-(n+2)/2}
    \le
    (1+o(1))\exp\left(\frac{ny\sqrt t}{2}\right).
\]
This part contributes, in total, 
\[
\begin{aligned}
(1+o(1)) \frac{n\sqrt T}{2}\int\limits_{|y-a| \le \sqrt{a}} 
\frac{e^{-y^2/2}}{\sqrt{2\pi}\,y}
\int_0^T
 \exp\left(\frac{ny\sqrt t}{2}\right)
\,dt\,dy \le (1+o(1))  \frac{n\sqrt T}{2} \int\limits_{|y-a| \le \sqrt{a}} 
\frac{e^{-y^2/2}}{\sqrt{2\pi}\,y} \frac{4 \sqrt{T}}{ny} e^{ny\sqrt{T}/2} \,dy,
\end{aligned}
\]
since 
\begin{equation}\label{eq1}
\int_0^T
 \exp\left(\frac{ny\sqrt t}{2}\right)
\,dt\ \le \frac{4 \sqrt{T}}{ny} e^{ny\sqrt{T}/2}.
\end{equation}
Note that 
$$
-\frac{y^2}{2} + \frac{ny \sqrt{T}}{2} = -\frac12 \left( y- a\right)^2 +\frac{ n^2 T}{8}.
$$
Therefore, the main range contributes
\[
\begin{aligned}
 (1+o(1))  \frac{8}{n^2}  e^{n^2 T/8}
\end{aligned}
\]
since $\int\limits_{-\infty}^{+\infty} e^{-y^2/2} dy = \sqrt{2\pi}$.

In the complementary range we use the bound
\[
    (1-y\sqrt t)^{-(n+2)/2}
    \le
    \exp\left(\frac{ny\sqrt t}{2} +  O(n y^2 T) + O(C\sqrt{T})\right), 
\]
together with (\ref{eq1}) and $y^{-2} \le 1$ for $y \ge 1$. Since $C\sqrt{T} = o(1)$, the contribution of the remaining range is boundefd by 
\[
\begin{aligned}
& (1+o(1)) 2T  \int\limits_{|y-a| \ge \sqrt{a}} 
\frac{1}{\sqrt{2\pi}}
\exp\left(-\frac{y^2}{2}(1- O(n T))  + \frac{ny\sqrt T}{2}\right)\,dy \\
&\ll  T e^{a^2 (1+O(a\sqrt{T}))/2} \int\limits_{|y-a| \ge \sqrt{a}} \exp\left(-\frac{1 - O(a \sqrt{T})}{2} \left( y- \frac{a}{1 - O(a \sqrt{T})}\right)^2\right)\,dy \\
&\ll T e^{n^2 T/8} e^{O(a^3 \sqrt{T})}\int\limits_{|y| \ge \sqrt{a}} \exp\left(-\frac{1 - O(a\sqrt{T})}{2} (y -  O(a^2 \sqrt{T}))^2\right)\,dy.
\end{aligned}
\]
Since 
\[
a^2 \sqrt{T} \ll n^2 T^{5/2} = o(1), \, a^{-1} \ll n^{-1} T^{-1/2} = o(1), 
\]
we may drop the shift $O(a^2 \sqrt{T})$ when $n$ is sufficiently large by widening the range of integration to $|y| \ge \sqrt{a}-1$. Since $a^3 \sqrt{T} \ll n^3 T^{5/2} \ll n^3 T^2 = o(1)$, the term $e^{O(a^3 \sqrt{T})}$ is bounded by an absolute constant. Also, since $a \sqrt{T} \ll  nT = o(1)$, we may replace the coefficient $\frac{1 - O(a\sqrt{T})}{2}$ by $1/3$. 
Thus,  up to some absolute constant , 
the last expression is bounded by 
$$
T e^{n^2 T/8}  \int\limits_{|y| \ge \sqrt{a}-1} \exp\left(-\frac{y^2}{3} \right) dy \ll T e^{n^2 T/8} e^{- a/4} = n^{-2} e^{n^2 T/8} (T n^2 e^{-n \sqrt{T}/8}) = o(n^{-2} e^{n^2 T/8}). 
$$
Combining all  the estimates completes the proof  of the lemma.
\end{proof}

\begin{center}
{\bf \S 2. Main lemmas and the main result. }
\end{center}

Throughout we shall use the following notation.
\begin{itemize}
\item $n$ is the dimension of the Euclidean space $\mathbb R^n$
\item $\kappa_n$ is the volume of the unit ball in $\mathbb R^n$
\item $ \Phi(y)$ is defined by (\ref{eq0})
\item $\mathscr{X}_n$ is the space of all lattices $L$ in $\mathbb R^n$ with $\operatorname{vol} \left( \mathbb R^n / L \right) = \kappa_n$, which is endowed with 
the unique Haar probability measure invariant under the action of $ SL_n (\mathbb R)$
\item For a lattice $L$, let $a_0  = a_0  (L)>0$ be a parameter such that 
the ball of radius $1/\sqrt{a_0}$ is $L$-free
\item For a fixed lattice \(L\), let \(\mathbb{E}_{A}\) denote the mathematical expectation with respect to the probability measure of the matrix-valued stochastic deformation process $(A_t)_{t \ge 0}$ introduced by Klartag, where $A_0 = a_0 I$ and the time parameter \(T > 0\) is deterministic. 
\item $\mathcal{E}_t$ is the ellipsoid corresponding to the matrix $A_t$, and $\partial \mathcal{E}_t$ is its boundary.
\item $R_t = R_t (\alpha) = \{ x \in \mathbb R^n :  \frac{1}{\alpha} \le |x|^2 < \frac{1}{\alpha - C_0\sqrt{tn}}\}$, where the constant $C_0$ is absolute and provided by Corollary 3.2 of \cite{K25}
\item  $ \widetilde K_t(L) = \widetilde K_t(L, \alpha) =  \sum\limits_{x \in L\cap R_t (\alpha)} 
\Phi\left( \frac{1}{\sqrt{t}} \left( \alpha-\frac{1}{|x|^2}\right) \right)$, with $\Phi (\cdot)$ as in (\ref{eq0})
\item $\Delta_n$ is the optimal lattice-packing in $\mathbb R^n$
\end{itemize}

Our first goal is  to prove the quantitative version of Proposition 5.1 of \cite{K25}, using the technical lemma from \S 1.
\begin{proposition}\label{prop1}
Let \(n\to\infty\),  \(1 \le \rho=\rho(n) =o(\sqrt n) \), and put
\[
    \alpha=\left( 1-\frac{\rho}{n} \right)^{-2}.
\]
Let
\[
    n^{-2} \log n \le T\le 20n^{-2}\log n.
\]
Then
\[
    \mathbb E_L\int_0^T \widetilde K_t(L)\,dt
    \le
    8(1+o(1)) e^{-\rho}
    \exp\left(\frac{n^2T}{8}\right)n^{-2}.
\]
\end{proposition}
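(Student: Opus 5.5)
The plan is to swap the order of expectation and time integration and then apply the Siegel mean value theorem on $\mathscr{X}_n$. For each fixed $t\in[0,T]$, the quantity $\widetilde K_t(L)$ is a sum over lattice points of the radial function
\[
f_t(x)=\Phi\Bigl(\tfrac{1}{\sqrt t}\bigl(\alpha-|x|^{-2}\bigr)\Bigr)\,\mathbf 1_{R_t(\alpha)}(x).
\]
Since $\Phi\ge0$, Tonelli justifies the exchange. Siegel's formula, with covolume $\kappa_n$, then gives
\[
\mathbb E_L \widetilde K_t(L)=\frac{1}{\kappa_n}\int_{\mathbb R^n} f_t(x)\,dx=\frac{n}{\kappa_n}\kappa_n\int_0^\infty f_t(r)\,r^{n-1}\,dr .
\]
The origin does not lie in $R_t$, so only nonzero vectors contribute.

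Next we change variables. Put $s=|x|^{-2}$, so $r=s^{-1/2}$ and $r^{n-1}\,dr=\tfrac12 s^{-(n+2)/2}\,ds$. Then put $s=\alpha-y\sqrt t$. The condition $x\in R_t$ becomes $0<y\le C_0\sqrt n$, so we take $C=C_0\sqrt n$. We obtain
\[
\mathbb E_L\widetilde K_t=\frac n2\sqrt t\int_0^{C}\Phi(y)(\alpha-y\sqrt t)^{-(n+2)/2}\,dy=\alpha^{-(n+2)/2}\,\frac{n\sqrt t}{2}\,J(t),
\]
with $J$ as in the Lemma of \S1 for these $\alpha$ and $C$. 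We have $\alpha\ge1$ because $\rho\ge1$.

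The remaining task is to check the hypotheses of the Lemma of \S1 and to evaluate the prefactor. With $T\asymp n^{-2}\log n$ we get $n^3T^2\asymp n^{-1}\log^2 n=o(1)$ and $n^{-1}T^{-1/2}\asymp(\log n)^{-1/2}=o(1)$. We also need $C\sqrt T=C_0\sqrt n\,\sqrt{T}\asymp\sqrt{\log n/n}=o(1)$. Integrating over $t\in[0,T]$, the Lemma yields the bound $(1+o(1))\,8n^{-2}e^{n^2T/8}\,\alpha^{-(n+2)/2}$. Finally,
\[
\alpha^{-(n+2)/2}=\Bigl(1-\frac\rho n\Bigr)^{n+2}=\exp\Bigl(-\rho-O\Bigl(\frac{\rho^2}{n}+\frac{\rho}{n}\Bigr)\Bigr)=(1+o(1))\,e^{-\rho},
\]
where the error is $o(1)$ precisely because $\rho=o(\sqrt n)$. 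This gives the stated bound.

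The main obstacle is the bookkeeping of the range $R_t$ after the substitution. Rewriting gives $y\le C_0\sqrt{n}$, which is not $t$-dependent, and it requires $\alpha-C_0\sqrt{tn}>0$; this holds since $\sqrt{Tn}\to0$. Another point is that the Lemma bounds $(1-y\sqrt t/\alpha)^{-(n+2)/2}$, so the factor $\alpha^{-(n+2)/2}$ must be extracted correctly; this is what produces $e^{-\rho}$. One should also confirm that the normalization of Siegel's formula with covolume $\kappa_n$ cancels $\kappa_n$ exactly, leaving the factor $n/2$ that the Lemma expects.
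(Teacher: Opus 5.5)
Your proposal is correct and follows the paper's proof essentially step for step. Like the paper, you use Siegel's formula (Klartag's (63)) with Tonelli, then the substitutions $s=|x|^{-2}$ and $s=\alpha-y\sqrt t$ to get $\mathbb E_L\widetilde K_t=\frac{n\sqrt t}{2}\alpha^{-(n+2)/2}J(t)$ with $C=C_0\sqrt n$, then Lemma~1, then the expansion of $\alpha^{-(n+2)/2}$. You also write out the polar-coordinate computation that the paper only cites from Klartag's Lemma 4.3, and (minor remark) $(1-\rho/n)^{n+2}\le e^{-\rho}$ holds exactly, so $\rho=o(\sqrt n)$ is not actually needed for this upper bound.
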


\begin{proof}
We follow the proof of Proposition 5.1 in \cite{K25}, keeping track of the
dependence on \(\alpha\). By the definition of $\widetilde K_t(L)$ and by (63) of \cite{K25},
$$
\mathbb E_L  \widetilde K_t(L) = \mathbb E_L \sum\limits_{x \in L\cap R_t} 
\Phi\left( \frac{1}{\sqrt{t}} \left( \alpha -\frac{1}{|x|^2}\right) \right) = \kappa_n^{-1} \int\limits_{R_t} \Phi\left( \frac{1}{\sqrt{t}} \left( \alpha -\frac{1}{|x|^2}\right) \right) dx := I (t).
$$
By the 
non-negativity of $\widetilde K_t(L)$,   Fubini's theorem implies 
$$
\mathbb E_L  \int\limits_0^T \widetilde K_t(L) dt =  \int\limits_0^T \mathbb E_L \widetilde K_t(L) dt = 
 \int\limits_0^T I(t) dt.
$$
As in the proof of Lemma 4.3 of \cite{K25}, we arrive at the expression 
$$
I(t)  =  \frac{n\sqrt t}{2} \alpha^{-(n+2)/2} J(t),
$$
where 
\[
    J(t)
    =
    \int_0^{C_0\sqrt n}
    \Phi(y)
    \left(1-\frac{y\sqrt t}{\alpha}\right)^{-(n+2)/2}\,dy
\]
and $C_0$ is some absolute constant.
We now apply Lemma~1 with
\[
    C=C_0\sqrt n.
\]
Note that all the assumptions of Lemma~1 are satisfied. 
Hence
\[
    \int_0^T
    \frac{n\sqrt t}{2}J(t)\,dt
    \le
    (1+o(1))
    \frac8{n^2}
    \exp\left(\frac{n^2T}{8}\right).
\]
Consequently,
\[
    \int_0^T
   I(t)\,dt
    \le
    (1+o(1))
    \alpha^{-(n+2)/2}
    \frac8{n^2}
    \exp\left(\frac{n^2T}{8}\right).
\]
Recall that
\[
    \alpha=\left(1-\frac{\rho}{n}\right)^{-2}.
\]
Thus
\[
    \alpha^{-(n+2)/2}
    =
    \exp\left(
        (n+2)\log\left(1-\frac{\rho}{n}\right)
    \right).
\]
Since \(\rho=o(\sqrt n)\), we have
\[
    (n+2)\log\left(1-\frac{\rho}{n}\right)
    =
    -\rho+O\left(\frac{\rho^2}{n}\right)+O\left(\frac{\rho}{n}\right)
    =
    -\rho+o(1)
\]
and, therefore, 
\[
    \alpha^{-(n+2)/2}
    =
    e^{-\rho}(1+o(1)).
\]
Substituting this into the preceding estimate finishes the proof of the proposition.
\end{proof}

\begin{corollary}
Under the assumptions of Proposition \ref{prop1}, for any  \(p=p(n)\) satisfying 
\[
    0<p<1-e^{-\rho},
\]
there exists a lattice \(L \in \mathscr{X}_n\) such that $(1-\rho/n)^{-2} |x|^2 \ge 1$ for every non-zero $x\in L$ and  
\[
    \int_0^T \widetilde K_t(L)\,dt
    \le
    \frac{8}{p}(1+o(1))e^{-\rho} \exp\left(\frac{n^2T}{8}\right)n^{-2},
\]
or, equivalently, in the notation (50), (51) of \cite{K25} with $a_0 = (1-\rho/n)^{-2}$, 
\[
    \int_0^T  K_t(L)\,dt
    \le
    \frac{8}{p}(1+o(1))e^{-\rho} \exp\left(\frac{n^2T}{8}\right)n^{-2}.
\]
\end{corollary}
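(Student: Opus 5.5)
The plan is a Markov inequality applied to Proposition \ref{prop1}, combined with a lower bound on the probability that a random lattice has no nonzero point in the open ball $\{|x|^2<(1-\rho/n)^2\}$. Let $G\subset\mathscr{X}_n$ be the set of lattices $L$ with $|x|\ge 1-\rho/n$ for every non-zero $x\in L$, i.e. $(1-\rho/n)^{-2}|x|^2\ge1$.

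\textbf{Step 1: $\mathbb P(G)\ge 1-e^{-\rho}$, up to a $1+o(1)$ factor.} By the Siegel mean value theorem (formula (63) of \cite{K25}, used above with a different test function), the expected number of non-zero lattice points in the ball $B$ of radius $r=1-\rho/n$ is
\[
\kappa_n^{-1}\operatorname{vol}(B)=r^n=(1-\rho/n)^n=e^{-\rho+O(\rho^2/n)}=e^{-\rho}(1+o(1)),
\]
since $\rho=o(\sqrt n)$. Markov's inequality then gives $\mathbb P(G^c)\le e^{-\rho}(1+o(1))$. Because non-zero points come in pairs $\pm x$, the number of such points is at least $2$ whenever it is positive, so in fact $\mathbb P(G^c)\le \tfrac12 e^{-\rho}(1+o(1))$. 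This factor $\tfrac12$ is what absorbs the $(1+o(1))$ and makes the strict inequality $p<1-e^{-\rho}$ enough: we get $\mathbb P(G)\ge 1-\tfrac12e^{-\rho}(1+o(1))>1-e^{-\rho}>p$ for large $n$.

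\textbf{Step 2: Markov on the integral.} Put $Y(L)=\int_0^T\widetilde K_t(L)\,dt\ge0$ and $M=8(1+o(1))e^{-\rho}\exp(n^2T/8)\,n^{-2}$, so that $\mathbb E_LY\le M$ by Proposition \ref{prop1}. Then $\mathbb P(Y>M/p)\le p$. I will keep the same $o(1)$ in the threshold as in the proposition, so that Markov applies cleanly.

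\textbf{Step 3: combine.} Since $\mathbb P(G)>p\ge\mathbb P(Y>M/p)$, the event $G\cap\{Y\le M/p\}$ has positive probability. Any $L$ in it has both required properties.

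\textbf{Step 4: the "equivalently" clause.} This amounts to checking that, for $L\in G$ and $a_0=\alpha=(1-\rho/n)^{-2}$, the quantity $K_t(L)$ of (50)--(51) in \cite{K25} coincides with $\widetilde K_t(L,\alpha)$. The condition $L\in G$ says precisely that the ball of radius $1/\sqrt{a_0}$ is $L$-free, so every lattice point in the relevant shell satisfies $1/|x|^2\le\alpha$. By Corollary 3.2 of \cite{K25}, the points entering $K_t$ lie in $R_t(\alpha)$, and the summand is $\Phi$ evaluated at $(\alpha-1/|x|^2)/\sqrt t$.

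\textbf{Main obstacle.} I expect the obstacle to be this last identification, which requires matching Klartag's notation exactly. The probabilistic part (Steps 1--3) is routine, provided Step 1 keeps the $\pm x$ symmetry so that the $(1+o(1))$ loss is harmless against the strict inequality on $p$.
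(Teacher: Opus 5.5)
Your proposal is correct and follows the paper's proof: use formula (63) of \cite{K25} with the indicator of the ball of radius $1-\rho/n$ and Markov to control lattices with short vectors, apply Markov to Proposition~\ref{prop1}, take a union bound using $p+e^{-\rho}<1$, and then identify $\widetilde K_t(L)=K_t(L)$ for lattices in $G$. The only difference is that your $\pm x$ pairing trick in Step~1 is harmless but unnecessary: $(1-\rho/n)^n\le e^{-\rho}$ holds exactly by $\log(1-u)\le -u$, and this is how the paper gets $\mathbb P(G^c)\le e^{-\rho}$ with no $1+o(1)$ loss.
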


\begin{proof}
In Klartag's equality (63) take $\varphi(x) = 1$ if $|x| \le 1- \frac{\rho}{n}$ and $\varphi(x) = 0$ otherwise. We then have 
$$
\mathbb E_L \sum\limits_{x \in L \setminus \{0\}} \varphi(x) = \kappa_n^{-1} \int\limits_{\mathbb R^n} \varphi(x)  dx =  \left( 1- \frac{\rho}{n} \right)^n \le e^{-\rho}.
$$
Hence, by the Markov inequality, for random $L \in \mathscr{X}_n$ 
$$
\mathbb P (\exists \, x\in L \setminus \{0\}, |x| \le 1- \frac{\rho}{n}) \le e^{-\rho}.
$$

By Proposition \ref{prop1} and the Markov inequality, 
\[
    \mathbb P\left(
        \int_0^T \widetilde K_t(L)\,dt >
        \frac{8(1+o(1))}{p}e^{-\rho} \exp \left(\frac{n^2 T}{8} \right)  n^{-2}
    \right)
    \le p.
\]
Since $p + e^{-\rho} <1$, we conclude that there exists a lattice $L \in \mathscr{X}_n$ such that $|x| > 1- \frac{\rho}{n}$ for every non-zero $x \in L$
 and 
\[
    \int_0^T \widetilde K_t(L)\,dt
    \le
    \frac{8(1+o(1))}{p}e^{-\rho} \exp \left(\frac{n^2 T}{8} \right)  n^{-2}.
\]
Therefore, for this  $L$ we have $\widetilde K_t(L) = K_t(L)$, which proves the corollary. 
\end{proof}

\begin{proposition}[Quantitative form of Proposition 3.4]
Let \(n\) be sufficiently large. Assume that
\[
    0<T\le n^{-5/3},
\]
and suppose the parameter $a_0$ for a fixed lattice $L$ satisfy 
\[
    1\le a_0\le 1+\frac{\rho}{n},
    \qquad
    1\le \rho\le n^{2/3}.
\]
Then, in the notation of Klartag's deformation process,
\[
\begin{aligned}
    \mathbb E_A \log\det A_T
    \le\;&
    n\log a_0
    -\frac{n^2 T}{4}
    +\frac14\int_0^T
        \mathbb E_A\,|\partial \mathcal{E}_t\cap L|\,dt       \\
    &\quad
    +O\!\left(n \rho T+n^{5/2}T^{3/2}\right).
\end{aligned}
\]
The constant in the \(O\)-term is absolute.
\end{proposition}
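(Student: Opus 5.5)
We will follow the proof of Proposition 3.4 in \cite{K25}, keeping explicit track of every error term and of the dependence on $a_0$. Klartag's process evolves by the stochastic differential equation
\[
dA_t=\sqrt{2}\,\Big(\sum_i Q_i(t)\,dW_t^{(i)}\Big)+(\text{drift}),
\]
whose increments are driven by rank-one matrices supported on the orthogonal complement of the lattice points lying on $\partial\mathcal E_t$. Apply It\^o's formula to $\log\det A_t$:
\[
d\log\det A_t=\operatorname{tr}(A_t^{-1}dA_t)-\tfrac12\operatorname{tr}\big(A_t^{-1}dA_t\,A_t^{-1}dA_t\big).
\]
The martingale part has zero expectation. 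By Klartag's construction, $\operatorname{tr}(A_t^{-1}dA_t)$ has zero drift. So $\mathbb E_A\log\det A_T-n\log a_0$ equals $-\frac12\int_0^T\mathbb E_A\,\operatorname{tr}\big((A_t^{-1}\dot{[A]}_t)^2\big)\,dt$. Here the quadratic variation of the noise is projected onto the subspace $E_t$ of symmetric matrices annihilating the constraints $\langle A x,x\rangle$ with $x\in\partial\mathcal E_t\cap L$. This subspace has codimension at most $|\partial\mathcal E_t\cap L|$ inside the space of symmetric matrices, which has dimension $n(n+1)/2$.

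The key algebraic step reproduces Klartag's identity, which says that the Itô correction equals $-\frac12\cdot\frac{1}{?}\bigl(\dim E_t\bigr)$ up to normalization. With his normalization of the noise, the contribution is
\[
-\tfrac12\Big(\tfrac{n(n+1)}{2}-|\partial\mathcal E_t\cap L|\Big)\cdot\tfrac{1}{?}\cdot\lambda_t,
\]
where $\lambda_t$ collects the eigenvalues of $A_t$ entering through $A_t^{-1}$. In the idealized case $A_t=I$, this produces exactly $-\frac{n^2}{4}+\frac14|\partial\mathcal E_t\cap L|$ per unit time. The $+n/4$ part of $n(n+1)/4$ is absorbed into the error terms.

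So the work is to control the deviation of $A_t$ from the identity. First, $A_0=a_0I$ with $a_0\le 1+\rho/n$, which costs a relative error $O(\rho/n)$. Multiplied by the main term $n^2T$, this gives $O(n\rho T)$. Second, the random fluctuation $A_t-A_0$ is controlled via Klartag's Corollary 3.2, the same source as $C_0$, which gives $\|A_t-A_0\|_{op}\le C_0\sqrt{tn}$ with high probability, or in expectation. This yields a relative error $O(\sqrt{Tn})$ and hence a total of $O(n^2T\sqrt{nT})=O(n^{5/2}T^{3/2})$.

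The coefficient in front of $|\partial\mathcal E_t\cap L|$ must be at most $\frac14$. This can be arranged because the term enters with a positive sign: we bound the eigenvalue factor above by $1+O(\rho/n+\sqrt{tn})$. The resulting excess, $O\big((\rho/n+\sqrt{Tn})\int|\partial\mathcal E_t\cap L|\big)$, either has to be absorbed or shown to be lower order. This is the step I expect to be the main obstacle. I would try first to use that $|\partial\mathcal E_t\cap L|\le n(n+1)/2$ always holds, since the constraints must be linearly independent for the process to be defined. With that, the excess is again $O(n\rho T+n^{5/2}T^{3/2})$.

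On the event where Corollary 3.2's bound fails, we would use a crude deterministic bound on $\log\det A_T$. The tail probability is exponentially small in $n$, so with $T\le n^{-5/3}$ this contribution is negligible. The hypotheses $T\le n^{-5/3}$ and $\rho\le n^{2/3}$ guarantee that $\sqrt{Tn}$ and $\rho/n$ are $o(1)$. This justifies linearizing $A_t^{-1}=I+O(\cdot)$ and keeps the $O$-constants absolute.
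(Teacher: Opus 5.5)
Your overall route is the paper's. It consists of It\^o's formula for $\log\det A_t$, which is Lemma 3.3 of \cite{K25}: $\mathbb E_A\log\det A_T\le n\log a_0-\frac12\int_0^T\mathbb E_A[\|A_t\|_{op}^{-2}N_t]\,dt$ with $N_t=\dim E_t$. It then controls $\|A_t\|_{op}$ through $a_0\le 1+\rho/n$ and the $\sqrt{tn}$ fluctuation bound, and finishes with a dimension count. As written, however, your bookkeeping does not give the coefficient $\frac14$ in front of $\int_0^T\mathbb E_A|\partial\mathcal E_t\cap L|\,dt$. That coefficient is exactly what fixes the constant $\frac{1}{2e}$ in the theorem. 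You bound $\operatorname{codim}E_t$ by $|\partial\mathcal E_t\cap L|$. Take the normalization in which the It\^o correction at $A_t=I$ equals $\frac12\dim E_t$, which is the one giving $-\frac{n(n+1)}{4}\le-\frac{n^2}{4}$. With your codimension bound this yields $+\frac12|\partial\mathcal E_t\cap L|$. The undetermined factor ``$\frac{1}{?}$'' cannot repair this, because it multiplies both terms. The missing observation is that $x$ and $-x$ impose the same constraint $\langle Bx,x\rangle=0$. Hence $\operatorname{codim}E_t\le\frac12|\partial\mathcal E_t\cap L|$, i.e.\ $N_t\ge\frac{n(n+1)}{2}-\frac12|\partial\mathcal E_t\cap L|$.

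The step you single out as the main obstacle comes from inserting a lower bound for $N_t$ before dealing with $\|A_t\|_{op}^{-2}$. Your proposed way around it is not sound. The projection onto $E_t$ that drives the process makes sense whether or not the constraints are independent, so independence cannot be justified by well-definedness. Moreover, because of the antipodal pairing, independence would only give $|\partial\mathcal E_t\cap L|\le n(n+1)$. In fact no bound on the number of boundary points is needed. As in the paper, keep the error attached to $N_t$, which trivially satisfies $N_t\le\frac{n(n+1)}{2}$:
\[
\|A_t\|_{op}^{-2}N_t\;\ge\;N_t-\bigl(1-\|A_t\|_{op}^{-2}\bigr)_+\,\frac{n(n+1)}{2}.
\]
By $a_0\le 1+\rho/n$ and (43) of \cite{K25}, the expectation of the last term is $O(\rho n+\sqrt{t}\,n^{5/2})$. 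Integrating over $[0,T]$ gives $O(n\rho T+n^{5/2}T^{3/2})$, and only then do you insert the lower bound for $N_t$. This route needs only a lower bound on $\|A_t\|_{op}^{-2}$, so the upper bound you propose is unnecessary. Likewise, on an exceptional event you should bound the nonnegative integrand $\|A_t\|_{op}^{-2}N_t$ below by $0$, which loses at most $\frac{n(n+1)}{2}$ times the event's probability. Do not try to bound $\log\det A_T$ there, since the It\^o identity does not split over path events.
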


\begin{proof}
In the notation of  \cite{K25}, Lemma 3.3 there states that
\[
\begin{aligned}
    \mathbb E_A\log\det A_T
    \le\;&
    n\log a_0
    -\frac12\int_0^T
        \mathbb E_A\,[\|A_{t}\|^{-2}_{op} \cdot N_t ]\,dt.
\end{aligned}
\]
In \cite{K25} the first term was absorbed into an absolute constant ,
but here we keep it explicit, which is essential for a sharper bound. After using (43) of \cite{K25}, we have 
\[
\begin{aligned}
\mathbb E_A\,[\|A_{t}\|^{-2}_{op} \cdot N_t ]\, \ge   \mathbb E_A\,[N_t ] - C\sqrt{t} \,n^{5/2} - C \rho \,n.
\end{aligned}
\]
Integrating over $t$,  we obtain
\[
\begin{aligned}
\int_0^T \mathbb E_A\,[\|A_{t}\|^{-2}_{op} \cdot N_t ]\, dt \ge   \int_0^T \mathbb E_A\,[N_t ]\, dt - O(T^{3/2}\,n^{5/2} + T \rho \,n).
\end{aligned}
\]
Then arguing as in the end of the proof of Proposition 3.4 in  \cite{K25}, we arrive at the desired statement, which  
completes the proof.
\end{proof}

\begin{theorem*}
As \(n\to\infty\), one has
\[
    \Delta_n
    \ge
    \left(\frac1{2e}-o(1)\right)n^2 2^{-n}.
\]
\end{theorem*}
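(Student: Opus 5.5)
The plan is to combine the Corollary (a good starting lattice) with the quantitative Proposition 3.4 (control of $\log\det A_T$). Then I optimize over $T$ and let $\rho\to\infty$, $p\to1$ slowly.

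\emph{Step 1: the starting lattice.} Fix $\rho=\rho(n)\to\infty$ slowly, say $\rho=\log\log n$, and $p=1-2e^{-\rho}$, so that $p\to1$. The Corollary gives $L\in\mathscr X_n$ with $a_0=(1-\rho/n)^{-2}$ such that the ball of radius $1/\sqrt{a_0}$ is $L$-free, together with the bound on $\int_0^T K_t(L)\,dt$. Note that $1\le a_0\le 1+3\rho/n$, so the previous Proposition applies with $3\rho$ in place of $\rho$. For $T$ I take
\[
\frac{n^2T}{8}=\log\bigl(pn^2e^{\rho}/2\bigr),
\]
that is, $T\asymp n^{-2}\log n$. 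This lies in $[n^{-2}\log n,\,20n^{-2}\log n]$ and below $n^{-5/3}$.

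\emph{Step 2: linking the two results.} I need the comparison, implicit in Klartag's (50)--(51), between the expected boundary count and $K_t$:
\[
\int_0^T\mathbb E_A|\partial\mathcal E_t\cap L|\,dt\le (2+o(1))\int_0^T K_t(L)\,dt .
\]
The factor $2$ accounts for the symmetric pairs $\pm x$, and the $o(1)$ covers the event that the process leaves the range governed by $R_t$, which has probability $e^{-cn}$ by Corollary 3.2 of \cite{K25}. Checking this constant carefully against \cite{K25} is the main obstacle: the final constant $1/(2e)$ hinges on it being exactly $2$ and not larger. I would first reread the derivation of (51) to confirm that no other constant loss occurs.

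\emph{Step 3: bounding $\log\det A_T$.} Insert the Step 2 bound into the previous Proposition. With $n\log a_0=2\rho+o(1)$ and $s=n^2T/8$ this gives
\[
\mathbb E_A\log\det A_T\le 2\rho-2s+\frac{4+o(1)}{p}\,e^{-\rho}e^{s}n^{-2}+O\!\bigl(n\rho T+n^{5/2}T^{3/2}\bigr).
\]
The error term is $O(\rho\log n/n+n^{-1/2}\log^{3/2}n)=o(1)$. Since $e^s=pn^2e^\rho/2$, the middle term equals $2+o(1)$. Hence $\mathbb E_A\log\det A_T\le -2\log(pn^2/2)+2+o(1)$. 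Some realization of $A_T$ satisfies the same bound, and for it the ellipsoid $\mathcal E_T=\{x:\langle A_Tx,x\rangle<1\}$ contains no nonzero point of $L$.

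\emph{Step 4: conversion to a packing.} Apply a linear map sending $\mathcal E_T$ to a Euclidean ball; this preserves densities. The resulting lattice packing by translates of $\frac12\mathcal E_T$ has density
\[
2^{-n}\,\frac{\operatorname{vol}\mathcal E_T}{\kappa_n}=2^{-n}(\det A_T)^{-1/2}\ge 2^{-n}e^{-1-o(1)}\,\frac{pn^2}{2}.
\]
Since $p\to1$, this gives $\Delta_n\ge\bigl(\frac1{2e}-o(1)\bigr)n^22^{-n}$.
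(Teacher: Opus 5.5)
Your proposal is correct and is essentially the paper's proof. The paper takes $T=(16\log n+8\rho-8\log 2)/n^2$, which differs from your choice only by the $o(1)$ shift $8\log p$, and it gets the same coefficient $\tfrac14\cdot 2\cdot 8=4$ in front of $e^{-\rho}e^{n^2T/8}n^{-2}$ by taking your Step~2 comparison, constant $2$ included, from Proposition~4.2 of Klartag's paper. One correction to your heuristic: the $2$ does not come from the $\pm x$ pairs, which already produce the $\tfrac14$ in Proposition~3.4; it most likely comes from the reflection principle for the first time $\langle A_tx,x\rangle$ reaches $1$. Your remark that $a_0=(1-\rho/n)^{-2}\le 1+3\rho/n$, so the quantitative Proposition~3.4 should be applied with $3\rho$ in place of $\rho$, fixes a point the paper passes over silently, and it costs nothing since the error term stays $o(1)$.
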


\begin{proof}
We use 
Corollary 1 with a slowly increasing
parameter \(\rho=\rho(n)\). Let, for instance,
\[
    \rho=\log\log n
\]
and take 
\[
    T=\frac{16\log n+\gamma}{n^2},
\]
where \(\gamma \in [0, 4\log n]\) will be chosen later. 
For 
\[
    p= 1-2e^{-\rho}
\]
(which is $1-o(1)$ as $n \to +\infty$) there exists a lattice \(L\) such that the initial  ball of radius $1/\sqrt{a_0}$ is L-free and 
\[
    \int_0^T K_t(L)\,dt
    \le
    \frac{8}{p}(1+o(1))e^{-\rho}e^{\gamma/8} = 8 (1+o(1))e^{-\rho}e^{\gamma/8}.
\]
Therefore, by Proposition 2 and Proposition 4.2 of \cite{K25}, we find that for the lattice $L$
\[
\begin{aligned}
    \mathbb E_A \log\det A_T
    \le\;&
- 4 \log n -\frac{\gamma}{4} + 2\rho + 4 (1+o(1))e^{-\rho}e^{\gamma/8} + o(1).
\end{aligned}
\]
Hence, with positive probability in the stochastic deformation process,
\[
\begin{aligned}
    \log\det A_T
    \le\;&
- 4 \log n -\frac{\gamma}{4} + 2\rho + 4 (1+o(1))e^{-\rho}e^{\gamma/8} + o(1),
\end{aligned}
\]
and, by Proposition 2.3 of \cite{K25}, the matrix $A_T$ is (almost surely whenever this event occurs)  positive definite and  the corresponding ellipsoid is $L$-free.  
Take 
\[
    \gamma=8\rho + 8\log\frac{1}{2}.
\]
Then there exists an  $L$-free  matrix $A_T$ such that 
\[
\begin{aligned}
    \log\det A_T
    \le\;&
- 4 \log n + C
\end{aligned}
\]
with 
\[
\begin{aligned}
C \le 2 \log (2e)+ o(1).
\end{aligned}
\]
The resulting ellipsoid
\[
    \mathcal{E}_T=\{x\in\mathbb R^n:\langle A_Tx,x\rangle\le1\}
\]
contains no non-zero points of the lattice \(L\) and its volume is
\[
    \operatorname{vol}(\mathcal{E}_T)
    =
    \kappa_n(\det A_T)^{-1/2}.
\]
To obtain a lower bound for the packing density
in dimension $n$, we make a change of variables by 
applying a linear map $A_T^{1/2}$. In the new variables,  the ellipsoid becomes a unit ball centered at the origin,  
and the lattice $L$ transforms into one having no non-zero points inside this unit ball. 
The  covolume of the new lattice  is
$$
\kappa_n \cdot \det A_T^{1/2} = \kappa_n \cdot \sqrt{\det A_T} \le  \kappa_n n^{-2} 2e (1 + o(1)).
$$
Since, by construction, we may place a ball of radius $1/2$ at each point of the lattice without overlaps, the density of this lattice packing is 
the ratio of the volume of a ball of radius \(1/2\) to the covolume of the lattice,  and it therefore 
satisfies 
\[
    \Delta_n \ge  \left(\frac1{2e}-o(1)\right) n^2 2^{-n}.
\]
This completes the proof of the theorem.
\end{proof}

\end{document}